\def\E{{\mathbb{E}}}
\def\P{{\mathbb{P}}}
\def\R{{\mathbb{R}}}
\def\X{{\mathbb{X}}}
\def\Y{{\mathbb{Y}}}
\newcommand{\al}{\alpha }
\newcommand{\te}{\theta}
\newcommand{\Te}{\Theta}
\newcommand{\wt}{\widetilde}
\def\8{\infty}
\def\E{\mathbb{E}}
\def\P{\mathbb{P}}
\def\<{\langle}
\def\>{\rangle}
\renewcommand{\d}{\delta}
\renewcommand{\a}{\alpha}
\newcommand{\eps}{\varepsilon}
\newcommand{\ov}{\overline}
\newcommand{\supp}{\mathrm{supp}}
\newtheorem{thm}[equation]{Theorem}
\newtheorem{lem}[equation]{Lemma}
\theoremstyle{definition}
\newtheorem{rem}{Remark}[section]
\numberwithin{equation}{section}
\begin{document}
\title[Heavy tail estimates for  affine type Lipschitz recursions.​​]{​​A simple proof ​of ​​heavy tail estimates for affine type Lipschitz recursions.​}
\author[D. Buraczewski, E. Damek]
{Dariusz Buraczewski, Ewa Damek}
\address{D. Buraczewski, E. Damek\\ Instytut Matematyczny\\ Uniwersytet Wroclawski\\ 50-384 Wroclaw\\
pl. Grunwaldzki 2/4\\ Poland}
\email{dbura@math.uni.wroc.pl\\ edamek@math.uni.wroc.pl}

\subjclass[2010]{Primary 60J05, 37Hxx}

\keywords{random difference equations, affine recursion, iterated functions system, Lipschitz recursion, heavy tails, tail estimates}

\thanks{The research was partially supported by the National Science Centre, Poland (grant numbers  UMO-2014/14/E/ST1/00588
and UMO-2014/15/B/ST1/00060)}

\begin{abstract}
We study the affine recursion $X_n = A_nX_{n-1}+B_n$
where $(A_n,B_n)\in \R^+ \times \R $ is an i.i.d. sequence and recursions  $X_n  = \Phi_n(X_{n-1})$ defined by Lipschitz transformations such that $\Phi (x)\geq Ax+B$.
It is known that under appropriate hypotheses the stationary solution $X$ has regularly varying tail, i.e.
$$
\lim_{t\to\8} t^{\a} \P[X>t] = C.
$$ However positivity of $C$ in general is either unknown or requires some additional involved arguments. In this paper we give a simple proof that $C>0$.
This applies, in particular, to the case when Kesten-Goldie assumptions are satisfied.
\end{abstract}

 \maketitle

\section{Introduction}
\subsection{Random difference equation}
Lipschitz iterations considered in this paper are modeled on the affine recursion, usually called in the literature the random difference equation. This is
the Markov process  $\{X_n\}$  on $\R$ defined by the formula
\begin{equation}
\label{eq:1}
X_n = A_nX_{n-1}+B_n,\ \ n\geq 1,
\end{equation}
where $(A_n,B_n)\in \R^+ \times \R $ is a sequence of i.i.d. (independent identically distributed) random variables and $X_0\in \R$ is an initial distribution.
If $\E \log A < 0$ and $\E \log^+|B|<\8$, the sequence $\{X_n\}$ converges in law to a random
variable $X$, which is the unique solution to the random
difference equation
\begin{equation}
\label{eq:2}
X =_d AX+B, \qquad \mbox{$X$ independent
of }(A,B);
\end{equation}
see \cite{V}.  The celebrated result of Kesten \cite{K} and Goldie \cite{G} is the following
 \begin{thm}
 \label{thm: kesten goldie}
 Assume that $\E \log A <0$, $\E A^\a = 1$ for some $\a>0$, $\E[|B|^\a + A^\a \log^+A]<\8$ and the law of $\log A$ is non-arithmetic. Then
 \begin{equation}\label{eq:3}
\lim_{t\to\8}t^\a \P[X>t] = C_+ \qquad \mbox{and} \qquad \lim_{t\to\8} t^\a\P[X<-t] = C_-.
\end{equation}
Moreover,
   $C_\8 = C_++C_->0$ if and only if
\begin{equation}\label{eq: fixed point}
\P[Ax+B=x]<1 \mbox{ for every $x\in \R$} .
\end{equation}
 \end{thm}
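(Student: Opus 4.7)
The plan is to split the theorem into the existence of the limits $C_\pm$ and the positivity of $C_\infty = C_+ + C_-$; the ``only if'' half of the moreover clause is automatic, since a deterministic fixed point $x_0$ with $\P[Ax_0+B=x_0]=1$ makes the constant $x_0$ a stationary solution and hence the unique one, killing both tails. For the existence of the limits I would use Goldie's implicit renewal strategy. Starting from $X\stackrel{d}{=}AX+B$ one writes
\begin{equation*}
\P[X>t] = \int\P[X>t/a]\,d\mu_A(a) + \bigl(\P[AX+B>t]-\P[AX>t]\bigr).
\end{equation*}
The substitution $u=\log t$, $H(u)=e^{\alpha u}\P[X>e^u]$ turns this into a renewal equation $H=H*\tilde\mu+r$, where $\tilde\mu$ is the $\alpha$-Esscher tilt of the law of $\log A$ (a probability measure because $\E A^\alpha=1$). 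The tilt $\tilde\mu$ inherits non-arithmeticity from $\log A$ and has strictly positive mean $m=\E[A^\alpha\log A]>0$: writing $\phi(z)=\E A^z$, strict convexity together with $\phi(0)=\phi(\alpha)=1$ and $\phi'(0)=\E\log A<0$ forces $\phi'(\alpha)>0$. Direct Riemann integrability of $r$ follows from $\E|B|^\alpha<\infty$ and $\E[A^\alpha\log^+\! A]<\infty$ after routine manipulation, and the key renewal theorem yields
\begin{equation*}
C_+ = \frac{1}{\alpha m}\,\E\bigl[(AX+B)_+^\alpha-(AX)_+^\alpha\bigr], \qquad C_- = \frac{1}{\alpha m}\,\E\bigl[(AX+B)_-^\alpha-(AX)_-^\alpha\bigr],
\end{equation*}
so that $\alpha m\,C_\infty=\E\bigl[|AX+B|^\alpha-|AX|^\alpha\bigr]$.

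For the nontrivial direction $C_\infty>0$ under the no-fixed-point hypothesis, I would exploit the series representation
\begin{equation*}
X=\sum_{k=0}^\infty \Pi_k B_{k+1}, \qquad \Pi_0=1,\ \ \Pi_k=A_1\cdots A_k,
\end{equation*}
and the decomposition $X=R_n+\Pi_n X^{(n)}$, where $R_n=\sum_{k<n}\Pi_k B_{k+1}$ is $\mathcal{F}_n$-measurable and $X^{(n)}$ is $\mathcal{F}_n$-independent with the same law as $X$. The no-fixed-point assumption forces $X$ to be non-degenerate, since a constant stationary solution would itself be a deterministic fixed point, so one may pick $\delta>0$ and a sign with, say, $p:=\P[X\geq\delta]>0$. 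On the event $\{\Pi_n\geq 2t/\delta\}\cap\{|R_n|\leq \Pi_n\delta/2\}\cap\{X^{(n)}\geq\delta\}$ one has $X\geq t$. The multiplicative walk $\Pi_n$ satisfies a Cram\'er--Lundberg type lower bound $\P[\sup_n\Pi_n\geq u]\gtrsim u^{-\alpha}$, a direct consequence of $\E A^\alpha=1$ and the classical ruin-theory analysis of $\log\Pi_n$ under its Esscher tilt. Combined with moment control of $R_n$ on the tilted measure and the $n$-independent positive probability $\P[X^{(n)}\geq\delta]=p$, this would yield $\liminf_{t\to\infty}t^\alpha\P[X\geq t]>0$, hence $C_\infty>0$.

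The main obstacle is precisely this last step: producing a quantitative lower bound of the form $\P[|X|>t]\gtrsim t^{-\alpha}$ while simultaneously controlling the remainder $R_n$ on the rare event that $\Pi_n$ is large. Two natural implementations come to mind: (i) run the argument at geometrically spaced levels $t_k=e^{kh}$ and use the shift-invariance of the random walk $\log\Pi_n$ together with a Chebyshev bound on $R_n$; (ii) apply the Esscher tilt to reduce to a positive-drift random walk and use a sharp renewal-type localization of $\Pi_n$ near $t/\delta$. Either route sidesteps Goldie's more intricate manipulation of the identity $\alpha m C_\infty=\E[|AX+B|^\alpha-|AX|^\alpha]$, and is, I expect, close in spirit to the simplification announced in the paper's title.
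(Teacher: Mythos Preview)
Your outline for the existence of the limits is the standard Goldie implicit-renewal argument, and the paper does not reprove it---Theorem~\ref{thm: kesten goldie} is quoted from Kesten and Goldie, and only the positivity of $C_\infty$ (via Lemma~\ref{mthm} and Section~\ref{sec: random difference}) receives a new proof. So that part of your proposal is correct but orthogonal to the paper's contribution.

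For positivity, you have exactly the right decomposition: the paper also writes $X = R_n + \Pi_n X^{(n)}$ (in their notation $\ov Y_n + \Pi_n \Y(\theta^n\underline\omega)$), uses the Cram\'er--Lundberg tail $\P[M>t]\asymp t^{-\alpha}$, and exploits independence of $X^{(n)}$ from $\mathcal F_n$. The place where you diverge---and where you yourself flag an obstacle---is the control of $R_n$ on $\{\Pi_n\ \text{large}\}$. Your two suggested routes (Chebyshev at geometric levels, or Esscher localization) would likely work but are heavier than what the paper does. The paper's trick is to ask only for a \emph{one-sided} bound $R_n > -Ct$, and to get it for free from Kesten--Goldie itself: replacing $B$ by $-|B|$ (or any negative minorant), the partial sums $R_n$ are monotone in $n$ and dominate the full series $\ov Y = \sum_{k\ge 0}\Pi_k B_{k+1}$, which is the stationary solution of an affine recursion and hence satisfies $\P[\ov Y < -Ct]\le 2C_1 (Ct)^{-\alpha}$. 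Choosing $C$ large makes this negligible against $\P[M>t]\ge \tfrac{C_2}{2}t^{-\alpha}$, so $\P\bigl[\exists n:\Pi_n>t,\ R_n>-Ct\bigr]\ge \delta t^{-\alpha}$. Intersecting with $\{X^{(n)}>C+1\}$ (here one uses unboundedness of $\mathrm{supp}\,\nu$, deduced from the no-fixed-point condition via the orbit argument you omitted) and disjointifying over the first such $n$ gives $\P[X>t]\ge \eta\delta t^{-\alpha}$. So your plan is essentially the paper's, but the missing idea is this bootstrap: use the heavy-tail theorem for the auxiliary recursion to control the remainder, rather than moment or tilt estimates.
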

The Kesten-Goldie theorem found enormous number of applications, both in pure and applied mathematics. We refer to \cite{BDM} for an overview of recent results related to the process $\{X_n\}$, examples and applications.


 Recently Guivarc'h and Le Page \cite{GL1} improved the second part of Theorem \ref{thm: kesten goldie}, showing that $C_+>0$ (or $C_->0$) if and only if the support of $X$ is unbounded at $+\8$ (or respectively at $-\8$). Thus, it cannot happen that $X$ has nontrivial tails of different order at $+\8$ and $-\8$.
The aim of this note is in particular to provide a simple proof of their result.

Existence of the limit in \eqref{eq:3} follows from a renewal type argument, which now is well understood in a much more general context. However, positivity of the limiting constant $C_\8$ does not follow directly from \eqref{eq:3} and usually requires an extra argument: for the affine recursion see \cite{Gri}, \cite{G}, for Lipschitz recursions see \cite{Mi}. Mirek's proof  (adopted from \cite{BDGHU}) is by no means satisfactory. In many cases, as described below, natural conditions for positivity of $C_\8$ are not known and the problem to formulate such remains open. We are going to present a simple argument that gives not only positivity of $C_\8$, but, what is more important, positivity of $C_+$ (or $C_-$) for a class of Lipschitz recursions including the ``Letac Model''.  This improves ``positivity '' results not only of Goldie \cite{G} and Mirek \cite{Mi}, but also of Guivarc'h and Le Page \cite{GL1} and Collamore and Vidyashankar \cite{CV}.

\subsection{Affine type Lipschitz recursions}
In his paper Goldie studied not only the affine recursion but also some slight modifications of it like the extremal recursion $$X_n  = \max\{A_nX_{n-1},B_n\},\qquad n\ge 1$$
or the Letac model
$$X_n  = \max\{A_nX_{n-1}+B_n, A_nC_n+B_n\},\qquad n\ge 1$$
and he observed that \eqref{eq:3} holds also in this extended setting.
More generally, one can consider the iterated functions system (IFS), i.e. recursions of the type
\begin{equation}\label{eq:4}
  \X_n = \Psi_n(\X_{n-1}),\qquad n\ge 1,
\end{equation}
where $\Psi_n$ is a sequence of i.i.d. random Lipschitz mappings on $\R$. Beginning
from the early nineties IFS modeled on Lipschitz functions have attracted a lot of attention:
Alsmeyer \cite{Alsm2014}, Arnold and Crauel \cite{AC},  Brofferio and Buraczewski \cite{BB},  Diaconis and Friedman \cite{DF}, Duflo \cite{Du}, Elton \cite{Elton}, Hennion and Herv\'e \cite{HH}, Mirek \cite{Mi}.

Sufficient conditions for existence of the stationary distribution were provided by Diaconis, Friedman \cite{DF} and Elton \cite{Elton}.
As in the affine case, $\X_n$ converges in distribution to $\X$, which is the unique solution to the stochastic equation
\begin{equation}
\label{eq: rec psi}
\X =_d \Psi(\X), \qquad \mbox{$\X$ independent
of }\Psi.
\end{equation}
However, to describe the tail of $\X$ some further assumptions are needed.
Usually one assumes that $\Psi(x)$ is close to the affine mapping $Ax+B$, then under the Cramer condition on $A$, Alsmeyer \cite{Alsm2014} and Mirek \cite{Mi} described the tail of $\X$ and proved \eqref{eq:3}. However positivity of the limiting constant was proved only in a very particular cases. Our aim is to  fill this gap.

 \section{Main result}
 \label{sec:main result}
A temporally homogeneous Markov chain $\{\X_n\}_{n\geq 0}$ on $\R$ is called iterated function system of iid Lipschitz mapps  {\it (IFS)}, if it satisfies a recursion of the form
\begin{equation}
\X_n=\Psi (\omega _n, \X_{n-1}),\quad \mbox{for} \ n\geq 1,
\end{equation}
where
\begin{itemize}
\item $\X_0$, $\{\omega_n\}_{n\ge 1}$ are independent random elements on a common probability space $\Omega$,
\item $\{\omega_n\}_{n\ge 1}$ are identically distributed and taking values in a measurable space $\Theta$,
\item $\Psi : \Theta \times \R \mapsto \R$ is jointly measurable and Lipschitz continuous in the second argument i.e.
$$
\big|\Psi (\omega ,x) - \Psi (\omega ,y)\big|\leq C_{\omega }|x-y|,$$
for all $x,y\in \R $, $\omega \in \Theta $ and a suitable $C_{\omega }>0$.
\end{itemize}
We will also write $\X _n= \Psi_n(\X_{n-1})$ for short. Then
$$
\X_n = \Psi_n\circ\ldots\circ \Psi_1(\X_0)=:\Psi _{n,1}(\X_0).
$$
Let $L(\Psi )$, $L(\Psi _{n,1})$ be the Lipschitz constants of $\Psi $, $\Psi _{n,1}$ respectively.
If $\E \log ^+L(\Psi )<\8 $, $\E \log ^+ | \Psi (\omega ,0)|<\8 $  and
$$
\lim _{n\to \8 }\frac{1}{n}\log L(\Psi _{n,1})<0 \quad \mbox{a.s.}$$
then
 $\X_n$ converges in distribution to a random variable $\X$, which does not depend on $\X_0$ and it satisfies \eqref{eq: rec psi}.

In this paper we consider IFS that can be estimated from below by the random affine transformation
 \begin{equation}\label{eq: A0}
   \Psi(x) \ge Ax+B
 \end{equation} for some random pair $(A,B)\in \R^+\times \R$. Without loosing generality we can assume $B< 0$ a.s.
Thus parallel to process $\{\X_n\}$ one can define the iteration  $\ov\X_{n+1}=A_{n+1} \ov\X_n+B_{n+1}$. A simple induction argument proves that for every n
 \begin{equation}\label{eq:6}
   \X_n \ge \ov \X_n\quad a.s.
 \end{equation}
We assume that the random pair $(A,B)$ satisfies hypotheses of Theorem \ref{thm: kesten goldie}. This implies, in particular, that $\ov \X_n$ converges in distribution to $\ov \X$, a solution to \eqref{eq:2}, which must be negative a.s. and
such that
\begin{equation}\label{eq:7}
  \lim _{t\to \8}t^{\a}\P[\ov \X<-t]=  C_1 >0.
\end{equation}
The Cramer condition on $A$ implies, in particular, that
\begin{equation}\label{eq:8}
  \lim_{t\to\8}t^\a\P[M>t] = C_2 > 0
\end{equation}
for $M=\max _n\Pi_n$, $\Pi_n= A_1\ldots A_n$.

Our main result is the following

 \begin{lem}
 \label{mthm}
Suppose that \eqref{eq: A0} and the assumptions of Theorem \ref{thm: kesten goldie} are satisfied. Assume further that $\E \log L(\Psi )<0$ and $\E \log ^+ | \Psi (\omega ,0)|<\8 $. 
 If $\X$ is unbounded at $+\8$, then there is $\epsilon>0$ such that
$$\P[\X>t]> \epsilon t^{-\a}$$ for large $t$.
 \end{lem}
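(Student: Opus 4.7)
The plan is to combine a forward-time iteration of the affine lower bound with a stopping-time argument for the product $\Pi_n$, exploiting that the assumption $B<0$ forces a monotonicity which is the key simplification. I realize the stationary IFS on the whole time line: let $(\Psi_k)_{k\in\Z}$ be i.i.d.\ and let $(\X_k)_{k\in\Z}$ be the stationary chain $\X_k=\Psi_k(\X_{k-1})$, so $\X_k\;=_d\;\X$ for every $k$. Iterating $\Psi_k(x)\ge A_kx+B_k$ forward from time $-n$ to $0$, and using $A_k\ge 0$, one obtains, for every $n\ge 1$,
\[
\X_0 \;\ge\; \Pi_n\,\X_{-n} + T_n,
\]
where $\Pi_n=A_0A_{-1}\cdots A_{-n+1}$ and $T_n=\sum_{k=-n+1}^{0}(A_0 A_{-1}\cdots A_{k+1})B_k$ (with the empty product equal to $1$). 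Here $\X_{-n}\;=_d\;\X$ is independent of $\F_n:=\sigma\bigl((A_k,B_k)_{-n<k\le 0}\bigr)$.

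The crucial observation is that since $B<0$ a.s., the sequence $(T_n)$ is strictly decreasing in $n$, so $T_n\downarrow T_\infty$ almost surely. A time-reversal identifies $T_\infty$ with the classical perpetuity $\sum_{k\ge 1}\Pi_{k-1}B_k$ in i.i.d.\ copies of $(A,B)$, which has the same law as $\overline{\X}$, and hypothesis \eqref{eq:7} then yields $\P[T_\infty\le -t]\sim C_1 t^{-\a}$ as $t\to\infty$. The pathwise containment $\{T_n\le -t\}\subset\{T_\infty\le -t\}$ therefore holds for every $n$ and passes without loss to any $(\F_n)$-stopping time.

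Now pick $x_0$ with $p_0:=\P[\X>x_0]>0$ and, crucially, with $x_0$ large enough that $C_2(x_0/2)^\a>2C_1$; the unboundedness of $\X$ at $+\infty$ is precisely what allows both conditions to hold simultaneously. Introduce the stopping time $\tau=\inf\{n\ge 1:\Pi_n>2t/x_0\}$, adapted to $(\F_n)$. Summing the fixed-$n$ inequality over $\{\tau=n\}$ gives $\X_0\ge \Pi_\tau\X_{-\tau}+T_\tau$ on $\{\tau<\infty\}$, and a standard strong-Markov bookkeeping shows $\X_{-\tau}$ is independent of $(\tau,\Pi_\tau,T_\tau)$ with $\X_{-\tau}\;=_d\;\X$. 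On the event $\{\tau<\infty\}\cap\{\X_{-\tau}>x_0\}\cap\{T_\tau>-t\}$ one has $\X_0>2t-t=t$, whence
\[
\P[\X>t]\;\ge\; p_0\Bigl(\P[\tau<\infty]-\P[T_\infty\le -t]\Bigr).
\]
Using \eqref{eq:8} on the first term and \eqref{eq:7} on the second, the right-hand side behaves as $p_0\bigl(C_2(x_0/2)^\a-C_1\bigr)t^{-\a}$ as $t\to\infty$, and by the choice of $x_0$ it is at least $p_0 C_1 t^{-\a}$ for all large $t$.

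The main subtlety I expect is in verifying two technical points: (i) the strong-Markov factorization $\P[\X_{-\tau}>x_0,E]=p_0\P[E]$ for $E\in\F_\tau\cap\{\tau<\infty\}$, which follows from $\X_{-n}\perp\F_n$ after partitioning by $\{\tau=n\}$; and (ii) the time-reversal identification $T_\infty\;=_d\;\overline{\X}$, obtained by relabeling the iid pairs $(A_{-j+1},B_{-j+1})_{j\ge 1}$. Once these are in place, the proof reduces to the single displayed lower bound together with the two known tail asymptotics \eqref{eq:7} and \eqref{eq:8}, with no further input needed beyond the unboundedness hypothesis, which is consumed solely through the freedom to take $x_0$ arbitrarily large.
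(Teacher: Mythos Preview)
Your proof is correct and follows essentially the same strategy as the paper's: both establish the pathwise inequality $\X \ge \Pi_n\cdot(\text{independent copy of }\X) + T_n$ with $T_n$ monotone decreasing (from $B<0$), and then balance the tail of $M=\sup_n\Pi_n$ against that of $\ov\X$ via a first-hitting/stopping-time decomposition, using unboundedness of $\X$ only to make the threshold large. The differences are purely cosmetic---the paper uses backward iterations and the shift $\theta^n$ in place of your two-sided stationary realization, decomposes over the first $n$ with $\{\Pi_n>t,\ \ov\Y_n>-Ct\}$ rather than your stopping time $\tau$, and tunes the free constant $C$ on the $T$-side rather than your $x_0$ on the $\Pi$-side.
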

The lemma may seem technical, but as we will see in Section \ref{sec: app}, for concrete examples it reduces the problem of positivity of the limiting constant to unboundedness of the support of the stationary measure.

\begin{rem}
Let $\nu $ be the law of $\X $. Suppose that
\begin{equation}\label{preserve}
\Psi (\supp \nu )\subset (\supp \nu ) \ a.s.,
\end{equation}
which happens for instance when $\Theta $ is a metric space and $\Psi $ is jointly continuous (see section \ref{section: Lipschitz}). Then condition \eqref{eq: A0} may be replaced by a weaker one
 \begin{equation}\label{eq: A1}
   \Psi(x) \ge Ax+B\quad x\in \supp \nu
 \end{equation}
 and the Lemma still holds.
\end{rem}

\begin{proof}[Proof of Lemma \ref{mthm}]
{\bf Step 1.} First we prove a stronger version of inequality \eqref{eq:6}. Let $\underline \omega =(\omega _1, \omega _2, ...)$
be a generic element of $\Omega$ and let $\theta \underline \omega =(\omega _2, \omega _3, ...)$ be the shift operator.

Given a pair $(a,b)\in \R^+\times \R$ we denote by
$$
(a,b)\circ x = ax+b
$$ the affine action of $(a,b)$ on $\R$. Then the process $\{\ov \X_n\}$ can be written as
\begin{align*}
  \ov \X_n^x &  = (A(\omega_n),B(\omega_n))\circ \ldots \circ (A(\omega_1),B(\omega_1))\circ x. \\
   &  = (A_n,B_n)\circ \ldots \circ (A_1,B_1)\circ x.
\end{align*}
 Let $\{\ov \Y^x_n\}$ be the associate backward process defined by
$$
\ov \Y_n^x = (A_1,B_1)\circ \ldots \circ (A_n,B_n)\circ x = \sum_{k=1}^n \Pi_{k-1} B_k + \Pi_n x = \ov\Y_n + \Pi_n x.$$
Let
$$
 \Psi _{1,n} (x)= \Psi_1\circ\ldots\circ \Psi_n(x)
$$
be the backward iteration. If $\E \log L(\Psi )<0$ and $\E \log ^+ | \Psi (\omega ,0)|<\8 $ then
$$
\Y =: \lim _{n\to \8 }\Psi _{1,n} (x)\ \mbox{exists a.s.},$$ see \cite{DF}, and it has the distribution law $\nu $. By \eqref{eq: A0} for every $n$
$$
 \Psi _{1,n} (x)\geq \ov \Y ^x_n.$$ Therefore,
$$
\Y \geq \lim _{n\to \8}\ov Y^x_n=\sum _{k=1}^{\8}\Pi _{k-1}B_k=:\ov \Y  a.s.$$
Then, by \eqref{eq: rec psi} and \eqref{eq: A0}, for every $n$
\begin{equation}\label{eq:10}
  \X = _d \Y =\Psi_1\circ\ldots\circ \Psi_n(\Y (\theta^n(\underline \omega)))
  \ge \ov \Y_n(\omega) + \Pi_n \Y (\theta^n\underline \omega).
\end{equation}
Notice that $\Y(\theta^n\underline \omega)\in \supp \nu $ so if \eqref{preserve} holds then  \eqref{eq: A0} may be replaced by \eqref{eq: A1} and the same argument goes through.

{\bf Step 2.}
Now let
$$
U_n = \big\{ \Pi_n > t \mbox{ and } -Ct < \ov \Y_n \big\}. $$
Using $\ov \Y _n\geq \ov \Y$, we prove that there are $C$ and $\d>0$ such that  for large $t$
\begin{equation}\label{eq:un}
\P\bigg[\bigcup_n U_n \bigg] \ge \d t^{-\a}.
\end{equation}
 By \eqref{eq:7} and \eqref{eq:8}, for large $t$, we have
\begin{eqnarray*}
\frac{C_2}2 t^{-\a} &\le& \P[M > u] \\
&=& \P\big[ \Pi_n > t \mbox{ for some $n$}  \big]\\
&=& \P\big[ \Pi_n > t \mbox{ for some $n$ and }  \ov \Y \le - Ct  \big] + \P\big[ \Pi_n > t \mbox{ for some $n$ and } \ov \Y   >  - Ct  \big]\\
&\le& \frac{2C_1}{C^\a}t^{-\a} + \P\big[ \Pi_n > t \mbox{ and } \ov \Y_n > -Ct \mbox{ for some $n$}\big]\\
&\le& \frac{2C_1}{C^\a}t^{-\a}+ \P\bigg[ \bigcup_n U_n \bigg].
\end{eqnarray*}
Choosing large $C$ enough we obtain \eqref{eq:un}.

{\bf Step 3.}
Since $\Y$ is unbounded at $+\8$
\begin{equation}
\P [\Y >C+1]=\eta >0.
\end{equation}
In view of \eqref{eq:7},  \eqref{eq:10} and \eqref{eq:un}, for large $t$ we have
\begin{eqnarray*}
\frac{C_2}2\d \eta t^{-\a} &<& \eta \d \P[M>t]\\ &\le & \eta \P\bigg[\bigcup_n U_n\bigg]\\
 &=  & \eta \sum_n \P\bigg[U_n \cap \bigg(\bigcup_{k=1}^{n-1} U_k\bigg)^c \bigg]\\
 &\le   & \sum_n \P\bigg[ \big\{  \Pi_n(\underline{\omega}) > t, \ov \Y_n(\underline{\omega}) > -Ct  \big\}\cap  \bigg(\bigcup_{k=1}^{n-1} U_k\bigg)^c \bigg]
 \cdot \P[ \Y (\theta^n\underline \omega ) > C+1] \\
 & \le   & \sum_n \P\bigg[ \big\{  \Pi_n (\underline \omega )\Y (\theta^n\underline \omega )+ \ov \Y_n (\underline \omega) > t  \big\}\cap U_n \cap \bigg(\bigcup_{k=1}^{n-1} U_k\bigg)^c \bigg]\\
& \le    & \sum_n \P\bigg[ \big\{  \Y > t  \big\}\cap U_n  \cap \bigg(\bigcup_{k=1}^{n-1} U_k\bigg)^c \bigg]\\
&\le& \P[\X >t].
\end{eqnarray*} This proves the lemma.
\end{proof}

\section{Applications of Lemma \ref{mthm}}
\label{sec: app}
In this section we study a few examples when $\Theta $ is a metric space and $\Psi $ is jointly continuous. Then the support of $\nu $ - the distribution law of the stationary solution is preserved by $\Psi $ a.s. and so \eqref{eq: A1} suffices to apply Lemma \ref{mthm}.

\subsection{Random difference equation}
\label{sec: random difference}
The following result was proved recently by Guivarc'h and Le Page \cite{GL1}.
 \begin{thm}
   \label{thm: gl}
Let $X$ be as in \eqref{eq:1} and let $\nu $ be the law of $X$. Under assumptions of Theorem \ref{thm: kesten goldie}, $C_+> 0$ if and only if the support of $\nu$ is unbounded at $+\8$.
 \end{thm}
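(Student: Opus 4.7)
The forward direction is immediate from the definition: if $C_+ > 0$ then $t^\alpha \P[X > t] \to C_+$, so $\P[X > t] > 0$ for all sufficiently large $t$, which forces $\sup(\supp \nu) = +\infty$.

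For the converse, the plan is to apply Lemma \ref{mthm} directly to the affine IFS $\Psi(x) = Ax + B$. Because the lemma requires a lower-bounding affine pair whose second coordinate is a.s.\ negative, I would introduce $B' := \min(B, -1)$; then $B' \leq -1 < 0$ a.s.\ and
\[
\Psi(x) = Ax + B \;\geq\; Ax + B' \qquad \text{for every } x \in \R,
\]
so \eqref{eq: A0} holds with $(A, B')$ playing the role of the lemma's $(A, B)$.

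The Kesten--Goldie hypotheses must then be checked for the modified pair $(A, B')$. The conditions on $A$, namely $\E \log A < 0$, $\E A^\alpha = 1$, non-arithmeticity, and $\E A^\alpha \log^+ A < \infty$, are unchanged; the moment bound on $B'$ follows at once from $|B'|^\alpha \leq (|B| + 1)^\alpha$. The only subtle point is the fixed-point condition \eqref{eq: fixed point} for $(A, B')$, needed implicitly in the lemma through the assertion that $C_1 > 0$. If $\P[Ax_0 + B' = x_0] = 1$ for some $x_0 \in \R$, then $B' = x_0(1-A)$ a.s.; but $B' < 0$ a.s.\ rules out $x_0 = 0$, rules out $x_0 > 0$ (this would force $A > 1$ a.s., contradicting $\E \log A < 0$), and rules out $x_0 < 0$ (this would force $A < 1$ a.s., contradicting $\E A^\alpha = 1$). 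So \eqref{eq: fixed point} holds automatically for $(A, B')$.

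With the hypotheses verified, continuity of $\Psi$ supplies the condition \eqref{preserve}, and $\X = X$ is unbounded at $+\infty$ by assumption. Lemma \ref{mthm} therefore yields $\P[X > t] > \epsilon t^{-\alpha}$ for all sufficiently large $t$; combined with the existence of $C_+ = \lim t^\alpha \P[X > t]$ from Theorem \ref{thm: kesten goldie}, this gives $C_+ \geq \epsilon > 0$. The only genuine obstacle is the non-degeneracy check above, and it is automatic; everything else is routine bookkeeping.
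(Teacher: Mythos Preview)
Your proof is correct and follows the same approach as the paper: apply Lemma~\ref{mthm} directly to the affine map $\Psi(x)=Ax+B$ and combine the resulting lower bound with the existence of the limit from Theorem~\ref{thm: kesten goldie}. The paper simply says ``Lemma~\ref{mthm} provides a simple proof''; you have spelled out the bookkeeping (the replacement $B'=\min(B,-1)$ that the paper absorbs into its ``without loss of generality $B<0$ a.s.'' remark, and the verification that the degenerate case $\P[Ax_0+B'=x_0]=1$ cannot occur, which guarantees $C_1>0$ in \eqref{eq:7}). Your mention of \eqref{preserve} is harmless but unnecessary, since the minorisation $Ax+B\ge Ax+B'$ holds on all of $\R$, so \eqref{eq: A0} applies directly.
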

 Lemma \ref{mthm} provides a simple proof of the Guivarc'h - Le Page theorem  and, in particular, gives positivity of the constant $C_\8$.
Indeed, it implies that, if the support of $X$ in unbounded at $+\8$, then
 $$
 \P[X>t] \ge \epsilon t^{-\a}
 $$ for some $\epsilon >0$ and large $t$. Thus the problem of positivity of $C_+$ in \eqref{eq:3} is reduced to boundedness or unboundedness of the support of $\nu$ at $+\8$.


If \eqref{eq: fixed point} holds then $\supp \nu $ is unbounded. Indeed, there are at least two points $x,y\in {\rm supp\; } \nu$. Moreover, $\E A^{\a }=1$ implies that there is $(a,b)\in {\rm supp\;}\mu$ with $a>1$. Since the support of $\nu$ is $\supp\mu$-invariant, points $(a,b)^n\circ x$ and $(a,b)^n\circ y$ are elements of the support of $\nu$. But their distance
 $$
\big| (a,b)^n\circ x - (a,b)^n\circ y\big| = a^n|x-y|
 $$ converges to $+\8$. Therefore the support of $\nu$ must be unbounded.

In fact there is a more precise description of $\supp \nu $.
If \eqref{eq: fixed point} holds then the support of $\nu$ is either $\R$, or a half-line, \cite{GL2}, see also \cite{BDM}. Moreover,  the support of $\nu$ can be characterized in terms of the support of $\mu$. No more is needed.
For $(a,b)\in \R^+\times \R$ such that $a\not=1$,  we denote by $x(a,b)$ the fixed point of the action of $(a,b)$. That is $x(a,b)$ is the unique point such that
$$
a\cdot x(a,b) + b = x(a,b).
$$  Then
$$
x(a,b) = \frac {b}{1-a}
$$
The following result was proved in \cite{GL2} (see also Proposition 2.5.4 in \cite{BDM})
\begin{lem}
Assume that
\begin{equation}\label{eq:h1}
\mbox{ there are } (a_1,b_1),(a_2,b_2)\in {\rm supp} \mu \mbox{ such that }
a_1>1, \ a_2 < 1 \ \mbox{ and }\ x(a_1,b_1)< x(a_2,b_2).
\end{equation}
Then there is a constant $c$ such that the support of $\nu $ contains the half-line  $[c, \8)$.

On the other hand if $\P[A=1, B>0]=0$  and for all $(a_1,b_1), (a_2,b_2)\in {\rm supp} \mu $ such that
$a_1>1$,  $a_2 < 1$ we have  $$x(a_2,b_2)\le x(a_1,b_1)$$ then the support of $\nu $ is contained in $(-\8 , c]$ for some $c\in \R$.

\end{lem}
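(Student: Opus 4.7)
The plan rests on the structural fact that $\supp\nu$ is closed and forward-invariant under
$\supp\mu$: $(a,b)\circ\supp\nu\subset\supp\nu$ for every $(a,b)\in\supp\mu$. This follows from
\eqref{eq:2}, since $\nu$ is the law of $AX+B$ with $X$ independent of $(A,B)$; consequently
$\supp\nu$ is invariant under the whole semigroup $S$ generated by $\supp\mu$. Writing
$T_i(x)=a_ix+b_i=x(a_i,b_i)+a_i(x-x(a_i,b_i))$ for $i=1,2$, the contraction $T_2$ pulls any
$y\in\supp\nu$ towards $x_2:=x(a_2,b_2)$, so $x_2\in\supp\nu$. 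Iterating the expansion $T_1$
on $x_2$ then yields $T_1^n(x_2)=x_1+a_1^n(x_2-x_1)\to+\infty$ where $x_1:=x(a_1,b_1)<x_2$,
so $\supp\nu$ is already unbounded at $+\infty$.

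To upgrade this to containment of an entire half-line I would analyse the orbit of $x_2$ under
$S$. The composition $T_2^k\circ T_1^n$ sends $x_2$ to $x_2+a_2^k(a_1^n-1)(x_2-x_1)$. Because
the Kesten--Goldie framework imposes non-arithmeticity of $\log A$, the multiplicative
semigroup of Lipschitz constants realised by elements of $S$ is dense in $(0,\infty)$;
combined with the freedom to let $n$ grow, this lets one approximate any target value in
$[x_2,\infty)$ by $g(x_2)$ for some $g\in S$. Closedness of $\supp\nu$ then gives
$[x_2,\infty)\subset\supp\nu$, so $c=x_2$ works.

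For the converse assertion set $c:=\sup\{x(a,b):(a,b)\in\supp\mu,\,a<1\}$; the hypothesis
gives $c\le x(a,b)$ for every $(a,b)\in\supp\mu$ with $a>1$, so $c\in\R$. A direct case check
shows $(-\infty,c]$ is $\supp\mu$-invariant: $(a,b)\circ(-\infty,c]=(-\infty,ac+b]$, and
$ac+b\le c$ reduces to $x(a,b)\ge c$ when $a>1$, to $x(a,b)\le c$ when $a<1$, and to $b\le 0$
when $a=1$ (the last holding a.s.\ by $\P[A=1,B>0]=0$). Consequently the a.s.\
backward-iteration limit $\lim_n(A_1,B_1)\circ\cdots\circ(A_n,B_n)\circ x_0$, which has law
$\nu$, stays in $(-\infty,c]$ whenever $x_0\in(-\infty,c]$; hence $\supp\nu\subset(-\infty,c]$.

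The genuinely delicate step is the density claim in part one: turning abstract unboundedness of
the orbit of $x_2$ into density on a full half-line is exactly where non-arithmeticity of
$\log A$ must be exploited. A workable implementation is to fix $N$ so large that $T_1^N(x_2)$
is a reference point far to the right, then choose elements $(a,b)\in\supp\mu$ whose $\log a$
approximate prescribed real values to tune the cumulative Lipschitz constant finely; ensuring
that this tuning can be done with all integer exponents non-negative (so one remains in the
semigroup, not the group) is the technical heart of the argument. The remaining steps are
straightforward.
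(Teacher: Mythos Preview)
The paper does \emph{not} prove this lemma; it merely cites it from \cite{GL2} and Proposition~2.5.4 of \cite{BDM}. So there is no in-paper proof to compare against, and I can only assess your proposal on its own merits.

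Your treatment of the second assertion is correct and complete: showing that $(-\infty,c]$ with $c=\sup\{x(a,b):a<1\}$ is $\supp\mu$-invariant, and then concluding via the backward iterations that $\supp\nu\subset(-\infty,c]$, is exactly the right mechanism (and it is the same mechanism the paper itself uses later, e.g.\ in the proof of Theorem~\ref{cor: cl}).

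For the first assertion, your argument has a genuine gap, which you yourself flag but do not close. Two issues:
\begin{itemize}
\item The lemma as stated assumes only \eqref{eq:h1}, not non-arithmeticity of $\log A$. Even granting that non-arithmeticity is ambient in the paper, it is an extra hypothesis you are importing.
\item More seriously, density of the multiplicative semigroup of Lipschitz constants in $(0,\infty)$ does \emph{not} by itself give density of the orbit of $x_2$ in a half-line. When you compose with an arbitrary $(a,b)\in\supp\mu$ to tune the slope, you also move the intercept, and you give no argument controlling this. Restricting to words $T_2^kT_1^n$ avoids that problem but then density of $\{a_2^ka_1^n\}$ requires $\log a_1/\log a_2\notin\Q$, which is not implied by non-arithmeticity of the law of $\log A$ (e.g.\ $\supp\mu$ could contain $a\in\{2,1/2,3\}$ and you might have picked $a_1=2$, $a_2=1/2$).
\end{itemize}
The arguments in \cite{GL2}/\cite{BDM} proceed differently: they use the full word semigroup generated by $T_1,T_2$ (not just blocks $T_2^kT_1^n$) and show that the set of fixed points of its contracting elements is already dense in a half-line, without invoking non-arithmeticity. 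So your unboundedness step is fine, your Part~2 is fine, but the ``technical heart'' you identify for Part~1 is not merely delicate---as sketched it does not go through, and a different idea is needed.
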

\begin{rem}
Notice that if $\P[A=1, B>0]>0$ then the support of $\nu $ is always unbounded at $\infty $.
\end{rem}

\subsection{Letac's recursion}
One of the recursions considered by Goldie \cite{G} was ``so called''  ``Letac model'', see also Letac \cite{L}:
$$
\wt X_n = B_n + A_n\max\big\{\wt X_{n-1}, C_n \big\}
= \max\big\{ A_n \wt X_{n-1} + B_n, A_n C_n + B_n\big\}, \quad n\ge 1.
$$
Clearly
\begin{equation}
\wt X_n\geq X_n.
\end{equation}
Under assumptions of Theorem \ref{thm: kesten goldie} plus $\E[A^\a |C|^\a]<\8$, Goldie proved that
$$\P[\wt X>t]\sim C_L t^{-\a} \qquad \mbox{ as } t\to\8,
$$ but he didn't obtained necessary and sufficient conditions for positivity of $C_L$. A sufficient condition for positivity of $C_L$ formulated there says that there is a constant $c$ such that $\P [B-c(1-A)\geq 0]=1$ and $\P [B-c(1-A)> 0] +\P [A(C-c)> 0]>0$. A simpler sufficient condition is given in \cite{CV}:
\begin{equation}\label{col}
\P [A>1, B> 0]>0\quad \mbox{or}\quad \P [A>1, B\geq 0, C>0]>0.
\end{equation}
However, the first part of \eqref{col} seems to be inaccurate in view of what we are going to prove below.

Due to Lemma \ref{mthm} it is sufficient to check when the support of $\wt \nu$ - the law of $\wt X$ is unbounded. We prove here an appropriate condition formulated in terms of  constants which can be explicitly computed knowing the law $\wt\mu$ of the triple $(A,B,C)$. Let
\begin{equation}
\label{eq:N}
\begin{split}
N_1 &= \sup \big\{ac+b:\; (a,b,c)\in {\rm supp } \wt\mu\big\},\\
N_2 &= \sup \big\{ x(a,b):\; (a,b,c)\in {\rm supp } \wt\mu \mbox{ and  } a<1\big\},\\
N_3 &= \inf \big\{ x(a,b):\; (a,b,c)\in {\rm supp } \wt\mu \mbox{ and  } a> 1\big\}.\\
\end{split}
\end{equation}
It may happen that $N_1 = \8$, $N_2 = \8$ or $N_3 = -\8$.

The following result holds (compare with Theorem 6.2 in \cite{G}).

\begin{thm}
\label{cor: cl} Assume $\P[A=1, B>0]=0$.
Then $C_L> 0 $ if and only if $N_3 < \max\{N_1,N_2\}=N$.
\end{thm}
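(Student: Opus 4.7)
\textbf{Proof plan for Theorem \ref{cor: cl}.}

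The first step is to reduce everything to a support question via Lemma \ref{mthm}. For the Letac recursion $\Psi(\omega,x) = B + A\max(x,C)$ one has $\Psi(x) \ge Ax+B$ and Lipschitz constant $L(\Psi) = A$, so the hypotheses of Lemma \ref{mthm} are inherited from those of Theorem \ref{thm: kesten goldie} together with $\E[A^\a|C|^\a] < \infty$. Since $\Psi$ is jointly continuous in $(\omega,x)$, the remark after Lemma \ref{mthm} applies and the only thing to check is whether $\supp \wt\nu$ is bounded above. Indeed, if it is bounded above then trivially $C_L = 0$, and if it is unbounded above then Lemma \ref{mthm} yields $\P[\wt X > t] \ge \ep t^{-\a}$ for large $t$, so $C_L > 0$. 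Hence the theorem reduces to the equivalence: $\supp \wt\nu$ is bounded above $\iff$ $N_3 \ge N$.

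For the direction $N_3 \ge N \Rightarrow$ bounded, note that under this hypothesis both numbers are finite (if $N = \infty$ then $N_3 = \infty$, impossible). Pick any $K \in [N, N_3]$. For every $(a,b,c) \in \supp \wt\mu$ one has $\Psi_{(a,b,c)}(K) = \max(aK+b,\, ac+b)$, and I would verify $\Psi_{(a,b,c)}(K) \le K$ by cases: the term $ac+b$ is $\le N_1 \le K$ by definition; for $aK+b \le K$ I split on the sign of $a-1$, getting $K \ge x(a,b)$ when $a < 1$ (ensured by $K \ge N_2$) and $K \le x(a,b)$ when $a > 1$ (ensured by $K \le N_3$), while the case $a = 1$ with $b > 0$ is ruled out because such points in $\supp \wt\mu$ would force $N_2 = \infty$ or $N_3 = -\infty$ by approaching them along $a_n \ne 1$ (using $\P[A=1,B>0]=0$), contradicting $N_3 \ge N$. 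Since $x \mapsto b + a\max(x,c)$ is non-decreasing, $\Psi_{(a,b,c)}$ maps $(-\infty, K]$ into itself a.s., so the stationary law $\wt\nu$ is supported in $(-\infty, K]$.

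For the converse, let $K_0 := \sup \supp \wt\nu < \infty$; closedness of $\supp \wt\nu$ puts $K_0$ in it. Joint continuity of $\Psi$ and the product structure of the joint law of $(A,B,C,\wt X)$ give $\Psi_{(a,b,c)}(\supp\wt\nu) \subset \supp \wt\nu$ for every $(a,b,c) \in \supp\wt\mu$, so in particular $\Psi_{(a,b,c)}(K_0) \le K_0$. Unpacking $\max(aK_0 + b, ac + b) \le K_0$ gives both $ac + b \le K_0$ (hence $N_1 \le K_0$) and $aK_0 + b \le K_0$; the latter yields $x(a,b) \le K_0$ when $a < 1$ (hence $N_2 \le K_0$) and $x(a,b) \ge K_0$ when $a > 1$ (hence $N_3 \ge K_0$). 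Combining, $N = \max(N_1,N_2) \le K_0 \le N_3$, i.e. $N_3 \ge N$. The main subtlety, and the only place where genuine care is needed, is the $a = 1$ case in the forward direction, which is disposed of by the stated hypothesis $\P[A=1,B>0]=0$ together with the limit-point argument above; everything else is bookkeeping on the definitions of $N_1, N_2, N_3$.
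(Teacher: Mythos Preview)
Your proposal is correct. The reduction via Lemma \ref{mthm} and the forward implication ($N_3\ge N\Rightarrow\supp\wt\nu$ bounded above) match the paper's argument almost exactly; the paper simply takes $K=N$ and checks that $(-\infty,N]$ is $\supp\wt\mu$-invariant by the same case split on $a$. Your treatment of the boundary case $a=1$, $b>0$ in $\supp\wt\mu$ is in fact more careful than the paper's: the paper only writes ``if $a=1$ and $b\le 0$ then \eqref{eq:13} holds'' and does not explain why the hypothesis $\P[A=1,B>0]=0$ excludes such points from $\supp\wt\mu$, whereas your limit-point argument (approximate by $(a_n,b_n,c_n)\in\supp\wt\mu$ with $a_n\neq 1$, forcing $N_2=+\infty$ or $N_3=-\infty$, either of which contradicts $-\infty<N\le N_3<\infty$) closes that gap.

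The genuine difference is in the converse. The paper proves the contrapositive ($N_3<N\Rightarrow\supp\wt\nu$ unbounded) by a constructive two-case argument: if $N_3<N_2$ it invokes the structure of $\supp\nu$ for the affine recursion (the half-line lemma quoted from \cite{GL2}) together with $\wt X\ge X$; if $N_3<N_1$ it manufactures an explicit orbit $\wt\Psi_{(a_1,b_1,c_1)}^n\bigl(\wt\Psi_{(a,b,c)}(x)\bigr)\to+\infty$ inside $\supp\wt\nu$. You instead argue directly: assume $K_0=\sup\supp\wt\nu<\infty$, use joint continuity to get $\Psi_{(a,b,c)}(K_0)\le K_0$ for every $(a,b,c)\in\supp\wt\mu$, and read off $N_1\le K_0$, $N_2\le K_0$, $N_3\ge K_0$ from the single inequality $\max(aK_0+b,\,ac+b)\le K_0$. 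Your route is shorter, self-contained (no appeal to the affine support lemma), and yields the sharper information $N\le\sup\supp\wt\nu\le N_3$ whenever the support is bounded; the paper's approach, on the other hand, gives an explicit mechanism showing how the support escapes to $+\infty$, which can be informative in concrete models.
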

\begin{rem}
If $\P[A=1, B>0]>0$ the due to $\wt X_n \geq X_n$, the support of $\wt \nu $ is unbounded at $\8 $ and $C_L>0$.
\end{rem}
\begin{rem}
It is not enough to assume $\P [A>1, B> 0]>0$ to have positivity of $C_L$ as it is claimed in \cite{CV}. Indeed, assume that $\wt\mu $ is supported on two points $(a_1,b_1,c_1)=(3,1,-1)$
and $(a_2,b_2,c_2)=(\frac{1}{2},-1,0)$ with probabilities $p,1-p$, $p>0$, such that
$p\log 3 +(1-p)\log \frac{1}{2}<0$. Then $\P [A>1, B> 0]>0$ but $N_3=-\frac{1}{2}$, $N_2=-2$, $N_1=-1$ and so $C_L$ cannot be positive.
\end{rem}
\begin{proof}[Proof of Theorem \ref{cor: cl}]
In view of Lemma \ref{mthm} it is sufficient to check whether the support of $\wt \nu$ is unbounded at $+\8$.

Assume  $N_3 \ge N$. We prove that  the half-line $(-\8,N]$ is $\wt\mu$-invariant.
First we observe that for every $(a,b,c)\in \supp \wt\mu $
\begin{equation}
\label{eq:13}
aN+b\le N.
\end{equation}
 Indeed, if $a\neq 1$ then
\begin{align*}
aN+b =& a\cdot x(a,b) + b + a(N - x(a,b))\\
 = & x(a,b) + a(N - x(a,b)) \\
= & x(a,b)(1-a) + aN\le N,
\end{align*}
 because if $a<1$, then $x(a,b)\le N_2 \le N$ and if $a> 1$, $x(a,b)\ge N_3\ge N$. If $a=1$ and $b\leq 0$ then \eqref{eq:13} holds.

Let $\wt \Psi _{(a,b,c)}(x)=\max \{ ax+b, ac+b\}$. Take any $(a,b,c)\in {\rm supp }\wt\mu$, then for any $x\le c$ 
$$
\wt \Psi _{(a,b,c)}(x) = ac+b \le N_1 \le N.
$$
For $c<x\le N$
$$
\wt \Psi _{(a,b,c)}(x) =ax+b \le aN+b \le N.
$$
Finally, we notice that $\supp \wt \nu $ is included in any $\supp \wt \mu $-invariant closed set $W$. Indeed, if $x\in W$ then $\wt \Psi (x)\in W$ a.s. so every $n$
$$
\wt \Psi _1\circ \dots \wt \Psi _n(x)\in W\quad a.s.$$
But $\wt X =_d\lim _{n\to \8}\wt \Psi _1\circ \dots\circ \wt \Psi _n(x)$ and so $\wt X \in W$ a.s.
Thus ${\rm supp \wt \nu}\subset (-\8,N]$ i.e. $\wt X\le N$ a.s.
This proves $C_L=0$.

\medskip

Assume now that $N_3<N$ and let $X_n^x $ be as in \eqref{eq:1} with the initial condition $x$. Then
\begin{equation}
\wt X_n^x\geq X_n^x\quad \mbox{a.s.}
\end{equation}
and so   $\wt X$ is stochastically larger than $X$,
where $X$ satisfies \eqref{eq:2}.
The same holds if we fix $(a,b,c)\in \supp \mu $ and we repeat both iterations i.e.
$$
\wt \Psi ^n _{(a,b,c)}(x)\geq \Psi ^n _{(a,b)}(x),$$ where $\Psi _{(a,b)}(x)=ax+b$.
We are going to consider two cases: $N_3<N_2$ and $N_3<N_1$. In the first case we can find $(a_1,b_1,c_1)$ and $(a_2,b_2,c_2)$ in the support of $\wt \mu $ such that $a_1>1$, $a_2<1$ and $x(a_1,b_1)<x(a_2,b_2)$. Then the support of $X$ contains a half-line $[a,\8 )$ and so, $\wt X$ is unbounded at $\8 $.

In the second case, let $x\in \supp \nu $ and let $(a,b,c)\in \supp \wt \mu $ be such that $ac+b>N_3$. Then $\wt \Psi _{(a,b,c)}(x)\in \supp \wt \nu $ and
$\wt \Psi _{(a,b,c)}(x)>N_3$. Take $(a_1,b_1,c_1)\in \supp \wt \mu $ such that $a_1>1$ and
$$
x(a_1,b_1)<\wt \Psi _{(a,b,c)}(x).$$
Then
\begin{align*}
\wt \Psi^n _{(a_1,b_1,c_1)}(\wt \Psi _{(a,b,c)}(x)) &\ge
\Psi^n _{(a_1,b_1)}(\wt \Psi _{(a,b,c)}(x))\\
&= a^n_1(\wt \Psi _{(a,b,c)}(x)-
  x(a_1,b_1)) + x(a_1,b_1)\to+\8.
\end{align*}
Since for every $n$, $\wt \Psi^n _{(a_1,b_1,c_1)}(\wt \Psi _{(a,b,c)}(x))\in {\rm supp } \wt\nu$, this set must be unbounded.
\end{proof}
A very particular form of the Letac recusion is considered in the literature, when $AC+B = 0$, that is
$$
\wt X_n = \max\big\{ A_n \wt X_{n-1} +B_n,0\big\}.
$$
Under assumptions of Theorem \ref{thm: kesten goldie}
$$\P[\wt X>t]\sim C_M t^{-\a} \qquad \mbox{ as } t\to\8.$$
Since this process has numerous applications (see e.g. \cite{BCDZ,CV}) positivity of $C_M$ is crucial. It is known that if $\P[A>1, B>0]>0$ then $C_M>0$, \cite{BCDZ,CV}. Here we  provide an optimal condition.
\begin{thm}
 Assume $\P[A=1, B>0]=0$.
Then, $C_M> 0 $ if an only if $N_3 < N_2$ or $\P[A>1, B>0]>0$.
\end{thm}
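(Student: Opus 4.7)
The plan is to deduce the result from Theorem~\ref{cor: cl} applied to the triple $(A,B,C)$, exploiting the identity $AC+B=0$. The first step is to observe that this identity forces $ac+b=0$ for every $(a,b,c)\in\supp\wt\mu$, so the constant $N_1$ in \eqref{eq:N} equals $0$. Consequently $N=\max\{N_1,N_2\}=\max\{0,N_2\}$, and Theorem~\ref{cor: cl} (together with the standing hypothesis $\P[A=1,B>0]=0$) reduces the problem to showing
$$
N_3<\max\{0,N_2\}\ \Longleftrightarrow\ N_3<N_2\ \text{or}\ \P[A>1,B>0]>0.
$$

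The second step handles the purely arithmetic part of this equivalence. The disjunction ``$N_3<N_2$ or $N_3<0$'' is equivalent to $N_3<\max\{0,N_2\}$: if $N_2\ge 0$ the maximum equals $N_2$, so the inequality reduces to $N_3<N_2$, while if $N_2<0$ the maximum equals $0$, so the inequality reduces to $N_3<0$. Thus it remains only to identify the condition ``$N_3<0$'' with ``$\P[A>1,B>0]>0$''.

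The third step is precisely this identification. For any $(a,b,c)\in\supp\wt\mu$ with $a>1$ the fixed point satisfies $x(a,b)=b/(1-a)$, which is negative iff $b>0$. Hence $N_3<0$ holds iff $\supp\wt\mu$ meets the open set $\{a>1,\,b>0\}$ in $\R^+\times\R\times\R$, and, by the standard characterisation of the topological support of a Borel probability measure, this happens iff $\P[A>1,B>0]>0$. Combining the three steps gives the theorem. I do not foresee any serious obstacle: the argument is essentially a rearrangement of the criterion from Theorem~\ref{cor: cl} under the constraint $AC+B=0$, and the only mildly non-formal point is the last equivalence between ``support meets an open set'' and ``event has positive probability'', which is standard.
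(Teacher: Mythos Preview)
Your proof is correct and follows essentially the same route as the paper: observe $N_1=0$ from the constraint $AC+B=0$, invoke Theorem~\ref{cor: cl} to reduce to $N_3<\max\{0,N_2\}$, rewrite this as ``$N_3<N_2$ or $N_3<0$'', and finally identify $N_3<0$ with $\P[A>1,B>0]>0$ via the fixed-point formula $x(a,b)=b/(1-a)$. The paper's proof is terser but structurally identical; your version simply spells out the case split and the support/open-set equivalence more explicitly.
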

\begin{proof}
Notice that here $N_1=0$. According to Theorem \ref{cor: cl} $C_M > 0$ if and only if $N_3 < N_2$ or
$N_3 < 0$. But $N_3<0$ means that there is $(a,b,c)\in \supp \wt \mu $ such that $a>1$ and $x(a,b)<0$ that is exactly $\P[A>1, B>0]>0$.
\end{proof}
\subsection{Iterated function systems}
\label{section: Lipschitz}
 Alsmeyer \cite{Alsm2014} and Mirek \cite{Mi} studied tails of general IFS, as defined in \eqref{eq:4}. 
Mirek assumed additionally that $\Theta $ is a metric space and for every $x\in \R$, the function $\theta \mapsto \Psi (\theta , x)$ is continuous. Then \eqref{preserve} holds, see \cite{Mi} and so the minorisation \eqref{eq: A1} only on the support of $\nu $ is the right one. Moreover, $\Psi $ in \cite{Mi}
 is comparable to the affine recursion in the following sense:
 for a.e. $\Psi$ there is a random variable $(A,B)\in \R^+\times \R$ such that
\begin{equation}
\label{eq: aff}
Ax-B \le \Psi(x) \le Ax+B,\qquad \mbox{for } x\in {\rm supp}\nu,
\end{equation} where $\nu$ is the support of $X$.
This condition has a very natural geometrical interpretation.
It means that the graph of $\Psi$ lies between two lines $Ax-B$ and $Ax+B$.
This allows us to think that the recursion is  close to the affine recursion.

To get the idea what is the meaning of \eqref{eq: aff} the reader may think of the recursion $\psi (\te ,x)=\max\{Ax,
B\}$, where $\te=(A, B)\in\R^{+}\times\R=\Te$. Notice that if $X_0=x\geq 0$ then all the iterations stay positive which implies that  $\supp \nu \subset [0,\8 )$. We have then
$$
0\leq \max (Ax,B)-Ax\leq B^+, \quad x\geq 0.$$
Notice that for the max recursion
\eqref{eq: aff} is not satisfied on $\R $, but only on $[0, \8)\supseteq\supp\nu$.

In this setting Mirek proved an analogue of Theorem \ref{thm: kesten goldie}
\begin{thm}\label{HTthm}
Suppose that $\Theta $ is a metric space and for every $x\in \R$, the function $\theta \mapsto \Psi (\theta , x)$ is continuous.
Assume that $\psi $ satisfies \eqref{eq: aff} and  the random pair $(A,B)$ satisfies hypotheses of Theorem \ref{thm: kesten goldie}. Let $\E \log L(\Psi )<0$ and $\E \log ^+ |\Psi (\omega ,0)|<\8$. Then $\X$ has a heavy tail and
\begin{align*}
\lim_{t\rightarrow\8}t^{\al}\P[X>t]& = C_+,\\
\lim_{t\rightarrow\8}t^{\al}\P[X<-t]&= C_-.
\end{align*}
\end{thm}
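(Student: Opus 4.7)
My plan is to apply Goldie's implicit renewal theorem \cite{G} to the fixed-point identity $\X\stackrel{d}{=}\Psi(\X)$, treating $\Psi$ as a bounded perturbation of the affine map $x\mapsto Ax$. Existence of the stationary law $\nu $ and the backward representation $\X\stackrel{d}{=}\lim_n \Psi_1\circ\cdots\circ\Psi_n(x)$ follow from \cite{DF, Elton} under the contraction hypotheses $\E\log L(\Psi)<0$ and $\E\log^+|\Psi(\omega,0)|<\8$. Continuity of $\theta\mapsto\Psi(\theta,x)$ makes $\supp\nu$ almost surely $\Psi$-invariant (see Remark on \eqref{preserve}), so \eqref{eq: aff} holds $\nu$-a.s., and one may write $\Psi(\X)=A\X+R$ with $|R|\le B$ pointwise; in particular, evaluating \eqref{eq: aff} at $x=0$ gives $|\Psi(0)|\le B$.

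Next I would establish the \emph{a priori} moment bound $\E|\X|^s<\8$ for every $s\in(0,\a)$. Convexity of $s\mapsto\E A^s$ together with $\E\log A<0$ and $\E A^\a=1$ forces $\E A^s<1$ on $(0,\a)$, and then iterating the sub-affine majorisation $|\Psi(\X)|\le A|\X|+|\Psi(0)|+B$ along the backward chain yields the bound from $\E B^\a<\8$.

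The core step is to verify the Goldie integrability hypothesis
$$
\int_0^\8\bigl|\P[\Psi(\X)>t]-\P[A\X>t]\bigr|\,t^{\a-1}\,dt<\8.
$$
The pathwise bound $|\Psi(\X)-A\X|\le B$ yields $\{A\X>t+B\}\subset\{\Psi(\X)>t\}\subset\{A\X>t-B\}$, so the integrand is bounded by $\P[t-B<A\X\le t+B]$. Since in the identity $\X\stackrel{d}{=}\Psi(\X)$ the copy of $\X$ on the right is independent of $(A,B)$, Fubini reduces the integral to a quantity controlled by $\E B^\a$ together with the moment bound on $\X$ established in the previous step. The non-arithmetic hypothesis on $\log A$ then lets Goldie's theorem conclude $t^\a\P[\X>t]\to C_+$. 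The analogous statement for $C_-$ follows by applying the same argument to $-\X$ with the lower bound $\Psi(x)\ge Ax-B$.

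The main obstacle is precisely this integrability check: the error $R$ is not independent of $\X$, so the difference $\P[A\X+R>t]-\P[A\X>t]$ cannot be treated as a clean convolution. The rescue is the pointwise domination $|R|\le B$, which turns the problem into an integral involving only the joint law of $(A,B,\X)$ with $(A,B)$ independent of $\X$; everything else is the standard Kesten--Goldie renewal machinery.
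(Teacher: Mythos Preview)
The paper does not prove Theorem~\ref{HTthm} itself; it is stated as Mirek's result and cited from \cite{Mi}. So there is no ``paper's own proof'' to compare against here.

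That said, your outline is essentially the route Mirek takes: verify Goldie's implicit-renewal integrability condition by exploiting the two-sided bound \eqref{eq: aff}, after first securing the a~priori moment estimate $\E|\X|^s<\infty$ for $s<\alpha$. A few small points worth tightening:
\begin{itemize}
\item Condition \eqref{eq: aff} is only assumed on $\supp\nu$, so your remark ``evaluating \eqref{eq: aff} at $x=0$ gives $|\Psi(0)|\le B$'' needs $0\in\supp\nu$, which is not guaranteed. Fortunately this claim is not used anywhere; the separate hypothesis $\E\log^+|\Psi(\omega,0)|<\infty$ already handles what is needed for existence of $\X$.
\item The majorisation you want is simply $|\Psi(x)|\le A|x|+B$ for $x\in\supp\nu$, directly from \eqref{eq: aff}; the extra $|\Psi(0)|$ term is superfluous. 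To iterate it along the backward process you should start from some $x_0\in\supp\nu$ and use that $\Psi$-invariance of $\supp\nu$ keeps all intermediate points inside $\supp\nu$, so that \eqref{eq: aff} applies at each step.
\item For the Goldie integral, when $\alpha>1$ the bound $\bigl|((\Psi(\X))^+)^\alpha-((A\X)^+)^\alpha\bigr|\lesssim (A|\X|+B)^{\alpha-1}B$ is what one gets from the mean-value theorem, and its expectation is finite precisely because $\E|\X|^{\alpha-1}<\infty$ (your a~priori bound) together with H\"older on $\E A^{\alpha-1}B$. Your sketch gestures at this but does not make the dependence on the moment bound explicit; it is worth spelling out, since this is the only place the moment estimate is actually consumed.
\end{itemize}
With these adjustments the argument is the standard one and goes through.
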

In such generality positivity of $C_\8 = C_+ + C_-$ was proved only under very particular and not intuitive assumptions. Namely, $s_{\infty}=\sup \{ s: \E |A|^s<\infty \}$. If
 additionally the support of $\nu$ is unbounded, and one
of the following condition is satisfied:
\begin{align}
\label{HTthm7}&s_{\8}<\8 \ \ \ \ \mbox{and}\ \ \ \ \ \lim_{s\rightarrow s_{\8}}\frac{\E(|B|^s)}{\E |A|^s}=0,\\
\label{HTthm8}&s_{\8}=\8 \ \ \ \ \mbox{and}\ \ \ \ \ \lim_{s\rightarrow
\8}\left(\frac{\E(|B|^s)}{\E |A|^s)}\right)^{\frac{1}{s}}<\8,
\end{align}
then Mirek \cite{Mi} proved that $C_{\8}> 0$.

\medskip

Our Lemma \ref{mthm} implies
\begin{thm}
Under hypotheses of Theorem \ref{HTthm}, if the support of $\nu$ is unbounded at $\8$, then $C_+ > 0$.
\end{thm}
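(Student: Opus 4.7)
The plan is straightforward: verify the hypotheses of Lemma \ref{mthm} in the setting of Theorem \ref{HTthm}, apply the lemma to obtain a polynomial lower bound on $\P[X > t]$, and combine with the existence of $C_+$ from Theorem \ref{HTthm} to conclude $C_+ > 0$.

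First I would extract the needed affine minorization. The two-sided envelope \eqref{eq: aff} forces $B \ge 0$ a.s., since otherwise the lower and upper bounds would cross, and its lower half reads $\Psi(x) \ge Ax - B$ for $x \in \supp \nu$. Setting $B' := -B - 1$, so that $B' \le -1 < 0$ a.s., the inequality is only strengthened, and one has $\Psi(x) \ge Ax + B'$ on $\supp \nu$. The Kesten--Goldie hypotheses on $(A, B)$ transfer to $(A, B')$: the conditions on $A$ are unchanged, and $\E|B'|^{\alpha} \le \E(|B| + 1)^{\alpha} < \infty$. This is exactly the weakened minorization \eqref{eq: A1} from the Remark following Lemma \ref{mthm}.

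Next I would verify the remaining hypotheses and apply the lemma. Since $\Theta$ is a metric space and $\Psi$ is jointly continuous, $\supp \nu$ is a.s.\ $\Psi$-invariant, i.e.\ \eqref{preserve} holds, as recalled in the preamble to Section \ref{sec: app}; hence the Remark applies and the minorization on $\supp \nu$ suffices. The integrability conditions $\E \log L(\Psi) < 0$ and $\E \log^+|\Psi(\omega,0)| < \infty$ are already assumed in Theorem \ref{HTthm}. Applying Lemma \ref{mthm}, and using that unboundedness of $\supp \nu$ at $+\infty$ is exactly unboundedness of the random variable $X$ with law $\nu$ at $+\infty$, one obtains $\epsilon > 0$ such that $\P[X > t] \ge \epsilon\, t^{-\alpha}$ for all sufficiently large $t$. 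Combining with the convergence $t^{\alpha} \P[X > t] \to C_+$ supplied by Theorem \ref{HTthm} gives $C_+ \ge \epsilon > 0$.

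There is no substantive obstacle: the analytic content has been packaged into Lemma \ref{mthm}. The only point requiring attention is the sign-convention mismatch between the nonnegative envelope constant $B$ in \eqref{eq: aff} and the convention $B < 0$ under which Lemma \ref{mthm} is proved, which is closed by the trivial shift $B \mapsto -B - 1$ above.
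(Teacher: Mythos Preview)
Your proposal is correct and follows the same approach as the paper, which simply records that the theorem is a direct consequence of Lemma~\ref{mthm}. You have supplied the routine verifications the paper leaves implicit: extracting the one-sided minorization $\Psi(x)\ge Ax-B$ from \eqref{eq: aff}, shifting to force the affine term strictly negative, checking that $(A,B')$ inherits the Kesten--Goldie hypotheses, and invoking the Remark after Lemma~\ref{mthm} (via the invariance \eqref{preserve}, which the paper asserts in Section~\ref{section: Lipschitz} under Mirek's continuity assumption) so that the minorization on $\supp\nu$ suffices.
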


Finally let us mention, that the above results may hold beyond the assumption \eqref{eq: aff}. Applying an appropriate transform one can consider limiting behavior of stationary measures of many other IFS defined e.g. on finite intervals, including the random logistic transform, the stochastic Ricker model, random automorphisms of [0,1]; see \cite{Alsm2014,BB}.

\section{Non-Cramer settings}
All the examples presented in previous sections works under the Cramer condition, when hypotheses of Theorem \ref{thm: kesten goldie} are satisfied.
Nevertheless the method is valid in more general settings. What we really need in Section \ref{sec:main result}, is to compare the tails of $\ov \X$ and $M = \sup_n \Pi_n$.  Exactly the same proof gives
 \begin{lem}
 \label{mthm2}
 Assume that
 \begin{itemize}
\item $\E \log L(\Psi )<0$ and $\E \log ^+ |\Psi (\omega ,0)|<\8$
 \item \eqref{eq: A0} is satisfied and $\E \log A<0$, $\E \log ^+|B|<\8$;
\item  the law of $M$ behaves regularly at infinity in the following sense: for every $\d >0$ there is $C>0$ such that
\begin{equation}\label{eq: A3}
  \P[M> C t] \le \d \P[M>t]
\end{equation}
for large $t$;
\item the tail of $\ov \X$ is controlled at $-\8$ by the tail of $M$ that is there is $C>0$ such that
\begin{equation}\label{eq: A2}
\P[\ov \X<-t]\le C \P[M>t], \ t>0;
\end{equation}
   \item $\X$ is unbounded at $+\8$
 \end{itemize}
 Then there is $\eps>0$ such that for large $t$
 $$
 \P[\X>t] \ge \eps \P[M > t].
 $$
 \end{lem}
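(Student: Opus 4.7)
The plan is to follow the three-step structure used in the proof of Lemma \ref{mthm} verbatim, replacing the two places where the Cramer-type tails \eqref{eq:7} and \eqref{eq:8} were invoked by the abstract assumptions \eqref{eq: A2} and \eqref{eq: A3}. Since conditions $\E \log L(\Psi ) <0$, $\E \log ^+|\Psi (\omega ,0)|<\8$, $\E \log A < 0$ and $\E \log ^+ |B| < \8$ are in force, the backward iteration $\Psi_{1,n}(x)$ still converges a.s.~to a random variable $\Y$ with $\Y =_d \X$, and likewise $\ov\Y_n^x \to \ov\Y := \sum_{k=1}^\infty \Pi_{k-1}B_k$ a.s., with $\ov\Y =_d \ov\X$. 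The minorisation \eqref{eq: A0} propagates to the backward process, so Step 1 of the previous proof yields the pointwise bound
\[
\X =_d \Y \ge \ov\Y_n(\underline \omega) + \Pi_n\, \Y(\theta^n\underline\omega)\qquad \text{a.s.,}
\]
where $\Y(\theta^n\underline\omega)$ is independent of $(\omega_1,\dots,\omega_n)$ and has law $\nu$.

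For Step 2, I would again set
\[
U_n = \bigl\{\Pi_n > t \text{ and } \ov\Y_n > -Ct\bigr\}
\]
and seek a constant $C$ (to be chosen below) and $\delta' > 0$ such that $\P[\bigcup_n U_n] \ge \delta' \P[M > t]$ for all large $t$. Splitting $\{M>t\}$ according to whether $\ov\Y \le -Ct$ or not (and using $\ov\Y_n \ge \ov\Y$), we get
\[
\P[M>t] \le \P[\ov\Y \le -Ct] + \P\Bigl[\bigcup_n U_n\Bigr].
\]
By \eqref{eq: A2} and the equality in law $\ov\Y =_d \ov\X$, the first term is bounded by $K\,\P[M > Ct]$. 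Given any $\delta > 0$, the regularity assumption \eqref{eq: A3} lets us pick $C$ so large that $\P[M > Ct] \le \delta\, \P[M > t]$ for large $t$. Choosing $\delta$ with, say, $K\delta \le 1/2$, we obtain $\P[\bigcup_n U_n] \ge \tfrac12 \P[M > t]$.

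Step 3 is then identical to the original argument. Unboundedness of $\X$ at $+\8$ gives $\eta := \P[\Y > C+1] > 0$. On the event $U_n \cap (\bigcup_{k<n} U_k)^c \cap \{\Y(\theta^n\underline\omega) > C+1\}$ the minorisation from Step 1 forces $\Y > -Ct + \Pi_n(C+1) > t$; using independence of $\Y(\theta^n\underline\omega)$ from the first factor, disjointness of the events $U_n \cap (\bigcup_{k<n} U_k)^c$ and Step 2 yields
\[
\P[\X > t] \ge \eta\, \P\Bigl[\bigcup_n U_n\Bigr] \ge \tfrac{\eta}{2}\, \P[M > t],
\]
so $\eps = \eta/2$ works. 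The only non-routine point is Step 2, where one must see that \eqref{eq: A3} is precisely the right ``slowly varying in $C$'' condition that replaces the exact regularly varying tails used in the original argument; I expect this to be the step most sensitive to the precise formulation of the hypotheses, but no new difficulty arises.
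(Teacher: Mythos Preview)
Your proposal is correct and follows exactly the approach the paper intends: the paper itself merely states ``Exactly the same proof gives'' Lemma~\ref{mthm2}, and you have carried this out, correctly identifying that \eqref{eq: A2} together with \eqref{eq: A3} replaces the use of \eqref{eq:7} and \eqref{eq:8} in Step~2, while Steps~1 and~3 go through unchanged.
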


This lemma can be applied e.g. in the settings of a recent paper of Kevei \cite{K}. The random difference equation \eqref{eq:1} is considered there in two cases:
\begin{itemize}
\item[1)] if $\E \log A^\a = 1$ for some $\a>0$, but $\E A^\a \log^+ A = \8$;
\item[2)] there is $\a>0$ such that $\E A^\a < 1$, but $\E A^s = \8$ for all $s>\a$.
\end{itemize}
Then, under some more detailed assumptions, applying the renewal type argument, Kevei \cite{K} proved analogous results to Theorem \ref{thm: kesten goldie}. Of course with a slightly different asymptotic.

In the first case there is $c_+>0$ and a slowly varying function $l(x)$ such that
\begin{equation}
\P [M>x]\asymp c_+l(x)x^{-\a }
\end{equation}
and
\begin{align*}
\lim _{x\to \8} l(x)^{-1}x^{\a }
\P [\X >x]=&C_+\geq 0,\\
\lim _{x\to \8} l(x)^{-1}x^{\a }
\P [\X <-x]=&C_-\geq 0,
\end{align*}
with $C_++C_->0$. In the second case the results are analogous but some more is required to conclude that $l(x)$ is slowly varying. In any case, if $l(x)$ is slowly varying, then \eqref{eq: A3} and \eqref{eq: A2} are satisfied and under hypotheses \eqref{eq: fixed point} from Lemma \ref{mthm2} we may conclude strict positivity of $C_+$ or $C_-$.

\medskip

Finally notice that because of  condition \eqref{eq: A3}, our method cannot be applied e.g. to the case when the law of $\log M$ is subexponential, as considered by Dyszewski \cite{D}.

\end{document}